%
%

\documentclass[smallextended,numbook,runningheads,final]{svjour3}
\smartqed
\usepackage{graphicx}
\usepackage{mathptmx}
\usepackage{amsmath}
\usepackage{amssymb}


\newcommand{\e}{\mathrm{e}}
\newcommand{\imm}{\mathrm{i}}
\newcommand{\diff}{\mathrm{d}}

\newcommand{\R}{\mathbb{R}}
\newcommand{\C}{\mathbb{C}}
\renewcommand{\v}[1]{\vec{#1}}
\newcommand{\diag}{\mathrm{diag}}
\renewcommand{\span}{\mathrm{span}}
\renewcommand{\S}{\mathcal{S}}
\newcommand{\Null}{\mathrm{Null}}
\newcommand{\mi}[1]{\text{\boldmath{$#1$}}}
\renewcommand{\ll}{\mbox{\mathversion{bold}$\ell$\mathversion{normal}}}
\newcommand{\sr}[1]{\textnormal{\footnotesize{#1}}}

\title{Fast transforms for high order boundary conditions\thanks{The
work was partially supported by MIUR, grant number and 2006017542}}
\author{Marco Donatelli}
\institute{M Donatelli \at
Dipartimento di Fisica e Matematica, Universit{\`a} dell'Insubria,
Via Valleggio 11, 22100 Como, Italy.\\
\email{marco.donatelli@uninsubria.it}
}

\begin{document}
\maketitle
\begin{abstract}
We study strategies for increasing the precision in the blurring
models by maintaining a complexity in the related numerical linear
algebra procedures (matrix-vector product, linear system solution,
computation of eigenvalues etc.) of the same order of the celebrated
Fast Fourier Transform. The key idea is the choice of a suitable
functional basis for representing signals and images.
Starting from an analysis of the spectral decomposition of blurring matrices
associated to the antireflective boundary conditions introduced in
[S. Serra Capizzano, SIAM J. Sci. Comput. 25-3 pp. 1307--1325],
we extend the model for preserving polynomials of higher degree
and fast computations also in the nonsymmetric case.

We apply the proposed model to Tikhonov regularization with smoothing norms and
the generalized cross validation for choosing the regularization parameter.
A selection of numerical experiments shows the effectiveness
of the proposed techniques.
\keywords{matrix algebras and fast transforms \and Tikhonov regularization \and boundary conditions}
\subclass{65F22 \and 65R32 \and 65T50}
\end{abstract}

\section{Introduction}\label{sect:intro}

We consider the de-convolution problem in the case of signals where
the convolution kernel is space invariant. In that case the observed
signal $g:{\cal I}\to\R$, ${\cal I}\subset\R$ is expressible as
\begin{equation}\label{eq:mod_cont}
    g(x) = \int_{\R} k(x-y)f(y)\,\diff y,
\end{equation}
where $f$ denotes the true signal. The approximation of the integral
operator via an elementary rectangle formula over an equispaced grid
with $n$ nodes leads to a linear system with $n$ equations.

When imposing proper boundary conditions, the related undetermined
linear system becomes square and invertible and fast filter
algorithms of Tikhonov type can be employed. When talking of fast
algorithms, given $n$ the size of the related matrices, we mean an
algorithm involving a constant number (independent of $n$) of fast
trigonometric transforms (Fourier, sine, cosine, Hartley
transforms) so that the overall cost is given by $O(n \log n)$
arithmetic operations.

For instance, when dealing with periodic boundary conditions, we
obtain circulant matrices which are diagonalizable by using the
celebrated fast Fourier transform (FFT). Unfortunately such boundary
conditions are not always satisfactory from the viewpoint of the
reconstruction quality. In fact, if the original signal is not
periodic the presence of ringing effects given by the periodic
boundary conditions spoils the precision of the reconstruction.

More accurate models are described by the reflective \cite{NCT} and
antireflective \cite{model-tau} boundary conditions, where the
continuity of the signal and of its derivative are imposed,
respectively. However the fast algorithms are applicable in this
context only when symmetric point spread functions (PSFs) are taken
into consideration.

The PSF represents the blur of a single pixel in the original
signal. Therefore, since it is reasonable to expect that the global
light intensity is preserved, the PSF is nothing else that a global
mask having nonnegative entries and total sum equal to $1$
(conservation law).  Often in several application such a PSF is
symmetric and consequently the symbol associated to its mask is an
even function.

Usually the antireflective boundary conditions lead to better
reconstructions since linear signals are reconstructed exactly, while
the periodic boundary conditions approximate badly a linear function
by a discontinuous one and the reflective ones by a piece-wise
linear function: in both the latter case Gibbs phenomena (called
ringing effects) are observed which are especially pronounced for
periodic boundary conditions.
The evidence of such fact is observed in several papers in the literature,
e.g. \cite{ADNS,ChHa08,DES,AR-reblur,Perr,model-tau}.

Such good behavior of the antireflective boundary conditions comes
directly from their definition \cite{model-tau}, since the
continuity of the first derivative of the signal was automatically
imposed. From an algebraic viewpoint, the latter property can be
derived from the spectral decomposition of the
coefficient matrix in the associated linear system. Indeed, when
considering a symmetric PSF and antireflective boundary conditions,
the linear system is represented by a matrix whose eigenvalues equal
to $1$ (the normalization condition of the PSF coming from the
conservation law) are associated to an eigenvector basis spanning
all linear functions sampled over a uniform grid with $n$ nodes, see
\cite{ADNS,ChHa08}. In \cite{ADNS}, such a remark has been the
starting point for defining and analyzing the antireflective
transform and for designing fast algorithms for the spectral
filtering of blurred and noisy signals. This algebraic
interpretation is useful because it can be used for proposing
generalizations that preserve the possibility of defining fast
algorithms, while increasing the expected reconstruction quality
especially when smooth or piece-wise smooth signals are considered.

In this paper, starting from the previous algebraic interpretation,
we define higher order boundary conditions. This can be obtained
by algebraically imposing that the spanning of quadratic or cubic
polynomials over a proper uniform gridding are eigenvectors
related to the normalized eigenvalue $1$.
Our proposal improves the antireflective model when the true signal
is regular enough close to the boundary.
Moreover, an important property of the proposed approach is that
it allows to define fast algorithms also in  the case
of nonsymmetric PSFs (such as the blurring caused by motion).
We note that reflective and antireflective boundary conditions can resort to
fast transforms only in the case of symmetric boundary conditions, while in the
case of nonsymmetric PSF we have fast transforms only for periodic
boundary conditions which usually provide poor restorations
for nonperiodic signals.

In general, if some information on the low frequencies of the signal
to be reconstructed are available, it is sufficient to impose such
sampled components as eigenvectors of the blurring operator related
to the eigenvalue $1$ (we recall that the global spectrum will have
$1$ as spectral radius). In such a way these component will be
maintained exactly by the filtering algorithms since they cut only
the spectral components related to small eigenvalues (somehow close
to zero) which are presumed to be essentially associated to the
noise. In reality, the noise by its random nature of its entries
will be decomposable essentially in high frequencies while the true
signal is supposed to be approximated in the complementary subspace
of low frequencies. Therefore, when applying filtering algorithms,
if the blurring operator has non-negligible eigenvalues associated
only to low frequencies (for instance low degree polynomials), then
the reconstruction of the signal will be reasonably good while the
noise will be efficiently reduced.

Given this general context, the present note is aimed to define
spectral decomposition of the blurring matrix such that the related
transform given by the eigenvectors is fast, the conditioning of the
transform is moderate (for such an issue in connection with the
antireflective transform see \cite{DH08}), and the low frequencies
are associated only to non-negligible eigenvalues.

The organization of the paper is as follows.  Section \ref{sec:bcs}
we introduce the deblurring problem investigating the spectral
decomposition of the coefficient matrix for the different kinds of
boundary conditions. In Section \ref{sec:hord} we define higher
order boundary conditions starting from the spectral decomposition
of the antireflective matrix. Such transforms are used in
Tikhonov-like procedures in Section \ref{sec:tik}.
Section \ref{sec:numexp} deals with a selection of
numerical tests on the de-convolution of blurred and noisy signals
and images. In Section \ref{sec:mD} the proposals are extended to a multi-dimensional
setting. Finally Section \ref{sec:concl} is devoted to concluding
remarks.

\section{Boundary conditions and associated coefficient matrices}\label{sec:bcs}
In this section we introduce the objects of our analysis and we
revisit the spectral decomposition of blurring matrices in the case
of periodic, reflective, and antireflective boundary conditions.

Let $\v{f} = (\dots, f_0, f_1, \dots, f_n, f_{n+1}, \dots)^T$ be the
true signal and $\{j\}_{j=1}^n$ the set of indexes in the field
of view. Given a PSF $\v{h} = (h_{-m}, \dots, h_0,
\dots, h_m)$, with $2m+1 \leq n$, we can associate to the PSF the
symbol
\begin{equation}\label{eq:symbol}
    z(t) = \sum_{j=-m}^mh_j\e^{\imm jt}, \qquad \imm=\sqrt{-1}.
\end{equation}

\subsection{Periodic and Reflective boundary conditions}
\paragraph{Periodic boundary conditions} are defined imposing
\[
f_{1-j}=f_{n+1-j} \qquad \mbox{and} \qquad f_{n+j}=f_j,
\]
for $j =1, \dots, n$. The blurring matrix associated to periodic boundary
conditions is diagonalized by the Fourier matrix
\[
     F_{ij}^{(n)} = \frac{1}{\sqrt{n}}
\exp\left({\frac{- \imm 2\pi(i-1)(j-1)}{n}}\right), \qquad
i,j=1,\dots,n.
\]
More precisely, the blurring matrix is
\begin{equation}\label{eq:permat}
        A_P = (F^{(n)})^H\diag(z(\v{x}))F^{(n)},
\end{equation}
where $x_i = 2(i-1)\pi/n$, for $i=1,\dots,n$. We note that the
eigenvalues $\lambda_i=z(x_i)$ can be easy computed by $\lambda_i =
[F^{(n)}(A_p\v{e}_1)]_i/[F^{(n)}\v{e}_1]_i$, where $\v{e}_1$ is the
first vector of the canonical base.

\paragraph{Reflective boundary conditions} are defined imposing
\[
f_{1-j}=f_j \qquad \mbox{and} \qquad f_{n+j}=f_{n+1-j},
\]
for $j = 1,\dots,n$. If the PSF is symmetric, i.e., $h_{-j}=h_j$, then
the blurring matrix associated to reflective boundary conditions is
diagonalized by the cosine transform (see \cite{NCT})
\[
    C_{ij}^{(n)} = \sqrt{\frac{2-\delta_{i,1}}{n}}
    \cos\left(\frac{(i-1)(2j-1)\pi}{2n}\right), \qquad
    i,j=1,\dots,n,
\]
where $\delta_{i,1}=1$ if $i=1$ and zero otherwise. More precisely,
the blurring matrix \nolinebreak is
\begin{equation}\label{eq:refm}
    A_R = (C^{(n)})^T\diag(z(\v{x}))C^{(n)},
\end{equation}
where $x_i = (i-1)\pi/n$, for $i=1,\dots,n$. Like for periodic
boundary conditions, the eigenvalues can be easy computed by
$\lambda_i = [C^{(n)}(A\v{e}_1)]_i/[C^{(n)}\v{e}_1]_i$.

\subsection{Antireflective boundary conditions}
\paragraph{ Antireflective boundary conditions} are defined imposing (see
\cite{model-tau})
\[
f_{1-j}=2f_1-f_{j+1} \qquad \mbox{and} \qquad f_{n+j}=2f_n-f_{n-j},
\]
for $j = 1,\dots,n$.

Let $Q$ be the sine transform matrix of order $n-2$ with entries
\[
  Q_{ij} = \sqrt{\frac{2}{n-1}}
            \sin\left(\frac{ij\pi}{n-1}\right), \qquad
    i,j=1,\dots,n-2.
\]
The antireflective transform of order $n$ can be defined by the
matrix (see \cite{ADNS})
\begin{equation}\label{eq:ART}
    T=\left[
          \begin{array}{c @{\quad}|@{\quad} c @{\quad}|@{\quad} c}
                   & \v{0} & \\
            \v{p} & Q & J\v{p} \\
             & \v{0} &  \\
          \end{array}
        \right],
\end{equation}
where
\[
    p_i= \sqrt{\frac{n(2n-1)}{6(n-1)}} \, \left(1- \frac{i-1}{n-1}\right),
\]
for $i=1,\dots,n$ and where the permutation matrix $J$ has
nontrivial entries $J_{i,n+1-i}=1$, $i=1,\dots,n$. We note that
$\|\v{p}\|_2=1$; moreover $J$ is often called flip matrix.

If the PSF is symmetric and $2m+1\leq n-2$, the spectral
decomposition of the coefficient matrix in the case of
antireflective boundary conditions is
\begin{equation}\label{eq:jcf}
A_A = T \,\diag(z(\v{y}))\, T^{-1},
\end{equation}
with $\v{y}$ defined as $y_i=(i-1)\pi/(n-1)$ for $i=1,\dots,n-1$ and
$y_n=0$. The eigenvalues of $A$ can be computed in $O(n\log n)$ real
operations resorting to the discrete sine transform (see
\cite{ADS08}).

Concerning the inverse antireflective transform $T^{-1}$, in
\cite{ADNS} we have given its expression and the resulting form is
analogous to that of the direct transform $T$. As a matter of fact,
given an algorithm for the direct transform, a procedure for
computing the inverse transform  needs only to have a fast way
for multiply $T^{-1}$ by a vector.

\begin{remark}\label{rem_Sc}
Observe that $\S_l=\span\{\v{p}, J\v{p}\}$ is the subspace spanned
by equi-spaced samplings of linear functions. In that case its
linear complement is given by
$\S_l^C=\span\{\sin(j\v{x})\}_{j=1}^{n-2}$, with $x_i =
(i-1)\pi/(n-1)$, $i=1,\dots,n$. Unfortunately such a linear
complement is not orthogonal and consequently the related transform
cannot be unitary, as long as we maintain such a trigonometric basis
useful for the fast computations. Up to standard normalization
factors this choice leads to the antireflective transform
\eqref{eq:ART}.
\end{remark}

\begin{remark}\label{rem_eig}
Implementing filtering methods, like Tikhonov, $\S_l$ is about fully
preserved since the associated eigenvalues are $z(0)=1$.
\end{remark}

\section{Higher order boundary conditions}\label{sec:hord}
Starting from Remarks \ref{rem_Sc} and \ref{rem_eig}, we define
higher order boundary conditions which represent the main contribution of
this work. The approach in Section \ref{sec:bcs} defines accurate
boundary conditions imposing a prescribed regularity to the true
signal $f$. The study of the spectral decomposition of the
associated coefficient matrices is a subsequent step for defining
fast and stable filtering methods. In this section, we define higher
order boundary conditions starting from the eigenspace, i.e., the
signal components, that we wish to preserve.

We start by imposing to preserve $\S_l$ and by suggesting other choices
for $\S_l^C$. By the way, the request of giving fast algorithms
suggests the use of a cosine or exponential basis in place
of that of sine functions, both for
the direct and inverse transforms. To preserve
polynomials of low degree and at the same time to resort to fast
trigonometric transforms, we need a transform with a structure
analogous to \eqref{eq:ART}. Therefore we need the cosine transform
and the Fourier matrix of order $n-2$. We define $F=F^{(n-2)}$ and
$C=C^{(n-2)}$, explicitly
\[
    C_{ij} = \sqrt{\frac{2-\delta_{i,1}}{n-2}}
    \cos\left(\frac{(i-1)(2j-1)\pi}{2n-4}\right)
\]
and
\[
F_{ij} = \frac{1}{\sqrt{n-2}} \exp\left({\frac{- \imm
2\pi(i-1)(j-1)}{n-2}}\right),
\]
for $i,j=1,\dots,n-2$.

We note that the first column of $C^T$ and of $F^H$ are a sampling of
the constant function. Hence the span of the columns of $C^T$ or
$F^H$ has a nontrivial intersection with $\mathcal{S}_l$. Accordingly,
we choose the two vectors for completing these trigonometric basis
as a uniform sampling of a quadratic function instead of a linear
function. More precisely, instead of $\mathcal{S}_l$ we consider
$\mathcal{S}_q=\span\{\v{q}, J\v{q}\}$, where $\v{q}$ is a uniform
sampling of a quadratic function in an interval that will be fixed
later.

The interval and the sampling grid for the basis functions of our
transform are fixed according to the following remark.

\begin{remark}
Up to normalization, the $j$th column of $Q$ is $\sin(j\v{x})$,
where $x_i = i\pi/(n-1)$, for $i=1,\dots,n-2$. Extending the
sampling grid such that the $j$th frequency is extended by
continuity, we add the grid points $x_0 = 0$ and $x_{n-1}=\pi$.
Since $\sin(jx_0)=\sin(jx_{n-1})=0$ for all $j$, we obtain exactly the
two zero vectors in the first and the last row of $T$, i.e., the
$(j+1)$th column of $T$ is the $j$th column of $Q$ extended in $x_0$ and
$x_n$, for $j=1,\dots,n-2$. The first and the last column of $T$ are the
sampling of linear functions at the same equispaced points $x_i \in [0, \, \pi]$,
$i=0,\dots,n-1$.
\end{remark}

\subsection{The case of symmetric PSF}
Firstly, we consider a symmetric PSF. In such case we can use the
cosine basis. Up to normalization, the $(j+1)$th column of $C^T$ is
$\cos(j\v{x})$, where $x_i=(2i-1)\pi/(2n-4)$, for $i=1,\dots,n-2$.
Extending the grid by continuity, we add $x_0=-\pi/(2n-4)$ and
$x_{n-1}=(2n-3)\pi/(2n-4)$. With this extended grid we can define
the basis functions as a $n$ points uniform sampling of the interval
\begin{equation}\label{eq:ints}
    [a, \; b] \, = \, \left[-\frac{\pi}{2n-4}, \;
\frac{(2n-3)\pi}{2n-4}\right],
\end{equation}
where the grid points are
\begin{equation}\label{eq:points}
x_i=(2i-1)\pi/(2n-4), \qquad i=0,\dots,n-1.
\end{equation}

We fix $\v{q} = \tilde{\v{q}}/\|\tilde{\v{q}}\|_2$, where
$[\tilde{\v{q}}]_{i+1} = (b-x_i)^2$, $i=0,\dots,n-1$. The fast
transform associated to $\mathcal{S}_q$ and $C^T$ can be defined as
follows:
\begin{equation}\label{eq:Tc}
    T_C=\left[
          \begin{array}{c @{\quad}|@{\quad} c @{\quad}|@{\quad} c}
                   & \v{c}_a^T & \\
            \v{q} & C^T & J\v{q} \\
             & \v{c}_b^T &  \\
          \end{array}
        \right],
\end{equation}
with $[\v{c}_a]_j=\sqrt{\frac{2-\delta_{j,1}}{n-2}}\cos((j-1)a)$ and
$[\v{c}_b]_j=\sqrt{\frac{2-\delta_{j,1}}{n-2}}\cos((j-1)b)=
(-1)^{j-1}[\v{c}_a]_j$ since $b = \pi-a$, for
$j=1,\dots,n-2$.

It remains to define the eigenvalues associated to $T_c$. Since we
want to preserve $\S_q$, similarly to what was done for $\v{p}$ in the
case of the antireflective boundary conditions, we associate to
$\v{q}$ and $J\v{q}$ the eigenvalue $z(0)=1$. Concerning the other
frequencies, since they are defined by the cosine transform, we
consider the eigenvalues of the reflective matrix in
\eqref{eq:refm}, but of order $n-2$.

In conclusion, for the case of a symmetric PSF, we define a new
blurring matrix using the following spectral decomposition
\begin{equation}\label{eq:Ac}
    A_C =T_C\diag(z(\v{x}))T_C^{-1},
\end{equation}
where $x_i = (i-2)\pi/(n-2)$, for $i=2,\dots,n-1$, and
$x_1=x_n=0$.

We note that $z(x_1)=z(x_n)=1$, while the eigenvalues $z(x_i)$, for
$i=2,\dots,n-1$, are the same of $A_R$ of order $n-2$ and hence they
can be computed in $O(n\log n)$ by a discrete cosine transform. The
product of $T_C$ by a vector can be computed mainly resorting to the
inverse discrete cosine transform. The inverse of $T_C$ will be
studied in Subsection \ref{sec:sherm}, where we will show that the
product of $T_C^{-1}$ by a vector can be computed mainly resorting
to a discrete cosine transform. Therefore the spectral decomposition
\eqref{eq:Ac} can be used to define fast filtering methods in the
case of symmetric PSFs. Moreover, we expect an improved restoration
with respect to the antireflective model since $A_C$ preserves
uniform samplings of quadratic functions while $A_R$ preserves
only uniform samplings of linear functions.

\subsection{The case of nonsymmetric PSF}
In the case of nonsymmetric PSF we can use the exponential basis.
Up to normalization, the $(j+1)$th column of $F^H$ is
$\exp(\imm j\v{x})$, where $\imm=\sqrt{-1}$ and
$x_i=(i-1)2\pi/(n-2)$, for $i=1,\dots,n-2$.
Extending the grid by continuity, we add $x_0=-2\pi/(n-2)$ and
$x_{n-1}=2\pi$. With this extended grid, we can define
the basis functions as a $n$ points uniform sampling of the interval
\begin{equation}\label{eq:intn}
    [a, \; b] \, = \, \left[-2\pi/(n-2), \; 2\pi\right],
\end{equation}
where the grid points are
\begin{equation}\label{eq:pointn}
x_i=(i-1)2\pi/(n-2), \qquad i=0,\dots,n-1.
\end{equation}

We note that the interval and the grid points in the nonsymmetric case are
different with respect to the symmetric case (compare \eqref{eq:intn} with
\eqref{eq:ints} and \eqref{eq:pointn} with \eqref{eq:points}). Therefore, defining
$\v{q} = \tilde{\v{q}}/\|\tilde{\v{q}}\|_2$, where
$[\tilde{\v{q}}]_{i+1} = (b-x_i)^2$, $i=0,\dots,n-1$, it is different
from which obtained in the symmetric case in the previous subsection.
The fast transform associated to $\mathcal{S}_q$ and $F^H$ can be defined as
follows
\begin{equation}\label{eq:Tf}
    T_F=\left[
          \begin{array}{c @{\quad}|@{\quad} c @{\quad}|@{\quad} c}
                   & \v{c}_a^T & \\
            \v{q} & F^H & J\v{q} \\
             & \v{c}_b^T &  \\
          \end{array}
        \right],
\end{equation}
where $[\v{c}_a]_j=\exp(\imm(j-1)a)/\sqrt{n-2}$ and
$[\v{c}_b]_j=\exp(\imm(j-1)b)/\sqrt{n-2}=1/\sqrt{n-2}$, for
$j=1,\dots,n-2$.

It remains to define the eigenvalues associated to $T_F$.
Similarly to what done for $T_C$, we associate to
$\v{q}$ and $J\v{q}$ the eigenvalue $z(0)=1$, while for the other
frequencies we consider the eigenvalues of the circulant matrix in
\eqref{eq:permat}, but of order $n-2$.

Consequently, in the case of a generic PSF, we define a new
blurring matrix using the following spectral decomposition
\begin{equation}\label{eq:Af}
    A_F =T_F\diag(z(\v{x}))T_F^{-1},
\end{equation}
where $x_i = (i-2)2\pi/n$, for $i=2,\dots,n-1$, and $x_1=x_n=0$.

We note that $z(x_1)=z(x_n)=1$, while the eigenvalues $z(x_i)$, for
$i=2,\dots,n-1$, are the same of $A_P$ of order $n-2$ and hence they
can be computed in $O(n\log n)$ by a fast Fourier transform. The
product of $T_F$ by a vector can be computed essentially resorting to the
inverse fast Fourier transform. The inverse of $T_F$ will be
studied together with the inverse of $T_C$ in the next Subsection, where we will show that the
product of $T_F^{-1}$ by a vector can be computed by using
the fast Fourier transform. Therefore the spectral decomposition
\eqref{eq:Af} can be used to define fast filtering methods also in the
case of nonsymmetric PSFs.

\subsection{The inverse transform}\label{sec:sherm}
In this subsection we show that the inverse of $T_C$ and the inverse of $T_F$ are fast transforms.
This means that the associated matrix vector product can be performed mainly via a suitable
trigonometric transform.

\begin{theorem}\label{th:inv}
Let
\begin{equation}\label{eq:Tx}
    T_X=\left[
          \begin{array}{c @{\quad}|@{\quad} c @{\quad}|@{\quad} c}
                   & \v{c}_a^T & \\
            \v{q} & X^{-1} & J\v{q} \\
             & \v{c}_b^T &  \\
          \end{array}
        \right]
\end{equation}
be a given $n \times n$ matrix,
where $J$ is the flip matrix and $X$ is a discrete trigonometric transform such that $JXJ=X$.
Then $T_X^{-1}\v{y}$ can be computed in $O(n\log(n))$ for all $\v{y}\in\C^n$.
\end{theorem}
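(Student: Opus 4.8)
The plan is to obtain an explicit block formula for $T_X^{-1}$ in terms of $X$ (which multiplies vectors fast by assumption), the vectors $\v{q}, J\v{q}$, and the boundary rows $\v{c}_a^T, \v{c}_b^T$. First I would partition $\mathbb{C}^n$ in the obvious way: index $1$, the middle block of indices $2,\dots,n-1$, and index $n$; so a vector splits as $(\alpha, \v{w}, \beta)$. Writing $T_X$ in this $3\times 3$ block form, the middle block is $X^{-1}$ (an $(n-2)\times(n-2)$ invertible matrix), the first and last columns have middle parts $\v{q}$ and $J\v{q}$ and zero in the corners, the top and bottom rows have middle parts $\v{c}_a^T$ and $\v{c}_b^T$ and zero in the corners as well — wait, the corners: reading off \eqref{eq:Tx}, the $(1,1)$, $(1,n)$, $(n,1)$, $(n,n)$ entries are all $0$. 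So $T_X$ has the ``bordered'' shape: an invertible core $X^{-1}$ bordered by two columns and two rows, with a zero $2\times 2$ corner block.

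The key step is to invert such a bordered matrix by a Schur-complement / block-elimination argument. Writing $T_X = \begin{bmatrix} 0 & R \\ L & X^{-1}\end{bmatrix}$ after a symmetric permutation grouping $\{1,n\}$ first (so $R$ is $2\times(n-2)$ built from $\v{c}_a^T,\v{c}_b^T$ and $L$ is $(n-2)\times 2$ built from $\v{q}, J\v{q}$), the inverse is
\begin{equation}\label{eq:blockinv}
 T_X^{-1} = \begin{bmatrix} -(R X L)^{-1} & (R X L)^{-1} R X \\ X L (R X L)^{-1} & X - X L (R X L)^{-1} R X \end{bmatrix},
\end{equation}
provided the $2\times 2$ matrix $R X L$ is invertible. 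Here the only genuinely new object to invert is $R X L$, a $2\times 2$ matrix — trivial — whose four entries are $\v{c}_a^T X \v{q}$, $\v{c}_a^T X J\v{q}$, $\v{c}_b^T X \v{q}$, $\v{c}_b^T X J\v{q}$, each computable by one application of $X$ plus a dot product. Then applying $T_X^{-1}$ to an arbitrary $\v{y}$ costs: a constant number of multiplications of $X$ by the fixed vectors $\v{q}, J\v{q}$ (done once, or $O(n\log n)$), one multiplication $X\v{y}_{\mathrm{mid}}$ of $X$ by (a piece of) $\v{y}$, a handful of dot products and scalar operations, and a couple of rank-one / rank-two updates — all $O(n)$ beyond the single fast transform. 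That gives the claimed $O(n\log n)$.

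The two points needing care are: (i) justifying that $R X L$ is invertible — this is where the hypothesis $JXJ = X$ enters, together with the structure of $\v{q}$ (a sampled quadratic, hence not in the span of the trigonometric columns) and of $\v{c}_a, \v{c}_b$; I would argue that $T_X$ is invertible in the first place because $\{X^{-1}$'s columns$\}$ together with $\v{q}, J\v{q}$ extended by the boundary values form a basis (this is essentially Remark 3.1's content, that $\v{q}, J\v{q}$ together with the trigonometric basis span all of $\mathbb{C}^n$), and then nonsingularity of $T_X$ forces nonsingularity of the Schur complement $R X L$ in \eqref{eq:blockinv}; (ii) checking that all the pieces in \eqref{eq:blockinv} really do reduce to one call of the transform $X$ applied to a vector, plus $O(n)$ vector arithmetic — in particular noting that $X L$ has only two columns and $R X$ only two rows, so the products $X L (R X L)^{-1} R X$ etc.\ are low-rank and never require forming $X$ as a dense matrix. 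I expect step (i), the invertibility of the $2\times 2$ Schur complement $R X L$, to be the main obstacle, since it is the only place where the specific geometry of the construction (quadratic sampling versus the flip-symmetric trigonometric transform) is actually used; everything else is bookkeeping on the block formula.
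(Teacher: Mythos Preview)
Your overall strategy --- reduce to inverting a $2\times 2$ Schur complement and observe that everything else is a bounded number of fast transforms plus $O(n)$ arithmetic --- is correct and is essentially what the paper does. But you have misread the matrix in \eqref{eq:Tx}: the vector $\v{q}$ is the \emph{entire} first column of $T_X$ (it has length $n$), and likewise $J\v{q}$ is the entire last column. From the definition $[\tilde{\v{q}}]_{i+1}=(b-x_i)^2$ one has $q_n=0$ but $q_1\neq 0$. Hence the four corner entries are
\[
(T_X)_{1,1}=q_1,\qquad (T_X)_{1,n}=(J\v{q})_1=q_n=0,\qquad (T_X)_{n,1}=q_n=0,\qquad (T_X)_{n,n}=(J\v{q})_n=q_1,
\]
so after your permutation the top-left $2\times 2$ block is $q_1 I_2$, not the zero block. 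Consequently your formula \eqref{eq:blockinv} is wrong as written; the Schur complement of $X^{-1}$ is $q_1 I_2 - RXL$, not $-RXL$. With that correction your argument goes through unchanged.

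The paper organizes the same computation slightly differently: it writes $T_X=\widetilde T_X + [\v e_1\,|\,\v e_n]\!\left[\begin{smallmatrix}0&\v c_a^T&0\\ 0&\v c_b^T&0\end{smallmatrix}\right]$, where $\widetilde T_X$ has zero first and last rows in the middle block. Because $q_n=0$, rows $1$ and $n$ of $\widetilde T_X$ decouple and $\widetilde T_X^{-1}$ is written down by inspection (here the hypothesis $JXJ=X$ is used, to get the last column in the form $J\v v$ with $\v v=-X\hat{\v q}/q_1$). Then one application of Sherman--Morrison--Woodbury with a rank-two update yields $T_X^{-1}$. This is your block-inversion in different clothing; both routes invert the same $2\times 2$ matrix and apply $X$ a bounded number of times. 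As for your worry (i), the paper does not verify invertibility of that $2\times 2$ matrix either, so you are not missing anything relative to the published proof.
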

\begin{proof}
We note that
\begin{equation}\label{eq:split}
    T_X = \widetilde{T}_X + [\v{e}_1 \, | \, \v{e}_n ]
        \left[
        \begin{array}{ccc}
          0 &\v{c}_a^T & 0\\
          0 &\v{c}_b^T & 0
        \end{array}
        \right],
\end{equation}
where
\[ \widetilde{T}_X=\left[
          \begin{array}{c @{\quad}|@{\quad} c @{\quad}|@{\quad} c}
                   & \v{0}^T & \\
            \v{q} & X^{-1} & J\v{q} \\
             & \v{0}^T &  \\
          \end{array}
        \right]
\]
is easy to invert.
Hence $T_X^{-1}$ can be computed by the Sherman-Morrison-Woodbury formula.

We compute $\widetilde{T}_X^{-1}$. Since $\v{q}_n=0$ the first and the last row
can be decoupled and we look for $\widetilde{T}_X^{-1}$ of the form
\[ \widetilde{T}_X^{-1}=\left[
          \begin{array}{c @{\quad}|@{\quad} c @{\quad}|@{\quad} c}
            \alpha  & \v{0}^T & 0\\
            \v{v} & X & J\v{v} \\
            0  & \v{0}^T & \alpha \\
          \end{array}
        \right],
\]
Fixing $\v{q}=[q_1, \, \hat{\v{q}}^T, \, 0]^T$,
by direct computation $\alpha=1/q_1$ and $\v{v}=-X\hat{\v{q}}/q_1$.
Therefore, $\v{v}$ can be computed in $O(n\log n)$ by a trigonometric transform.
For the implementation it can be explicitly computed and inserted into the code.

Given $A\in \C^{n\times n}$ and $U, V \in \C^{n\times k}$, the Sherman-Morrison-Woodbury
formula is \cite{Gene}:
\begin{equation}\label{eq:smw}
    (A+UV^H)^{-1}=A^{-1}-A^{-1}U(I+V^HA^{-1}U)^{-1}V^HA^{-1}.
\end{equation}
It can be very useful for computing the inverse of $A+UV^H$ when $k \ll n$,
taking into account the possible instability.
Applying the formula \eqref{eq:smw} to \eqref{eq:split} we obtain
\begin{eqnarray*}
T_X^{-1} & = & \widetilde{T}_X^{-1}-\widetilde{T}_X^{-1}[\v{e}_1 \, | \, \v{e}_n ]
    \left(I+\left[
        \begin{array}{ccc}
          0 &\v{c}_a^T & 0\\
          0 &\v{c}_b^T & 0
        \end{array}
        \right]\widetilde{T}_X^{-1}[\v{e}_1 \, | \, \v{e}_n ]\right)^{-1}
        \left[
        \begin{array}{ccc}
          0 &\v{c}_a^T & 0\\
          0 &\v{c}_b^T & 0
        \end{array}
        \right]\widetilde{T}_X^{-1}\\
  & = & \widetilde{T}_X^{-1}-\left[
        \begin{array}{cc}
          \alpha & 0\\
          \v{v} & J\v{v} \\
          0 & \alpha
        \end{array}
        \right]
    \left[
        \begin{array}{cc}
          1+\v{c}_a^T\v{v} & \v{c}_a^TJ\v{v}\\
          \v{c}_b^T\v{v} & 1+\v{c}_b^TJ\v{v}
        \end{array}
    \right]^{-1}
    \left[
        \begin{array}{ccc}
          \v{c}_a^T\v{v} &\v{c}_a^TX & \v{c}_a^TJ\v{v}\\
          \v{c}_b^T\v{v} &\v{c}_b^TX & \v{c}_b^TJ\v{v}
        \end{array}
    \right].
\end{eqnarray*}
We note that $\v{c}_a^TX$ and $\v{c}_b^TX$ can be computed in $O(n\log(n))$
and moreover they can be explicitly computed and
inserted into the implementation like done for the vector $\v{v}$.
In this way the matrix vector product for $T_X^{-1}$ requires a fast discrete
trigonometric transform of $O(n\log(n))$ plus few lower order operations between vectors.
\qed
\end{proof}

From Theorem \ref{th:inv}, it follows that the product of $T_C^{-1}$ and $T_F^{-1}$,
by a vector can be computed in $O(n \log(n))$ and hence they are fast transforms.

\section{Tikhonov regularization with fast transforms}\label{sec:tik}
We consider the Tikhonov regularization, where the regularized
solution is computed as the solution of the following minimization
problem
\begin{equation}\label{eq:mintik}
    \min_{\v{f}\in\R^n}\left\{\|\v{g}-A\v{f}\|_2^2+\mu\,\|L\v{f}\|_2^2\right\}, \qquad \mu>0,
\end{equation}
where, $\mu$ is the properly chosen regularization parameter,
$\v{g}$ is the observed signal, $A$ is the coefficient matrix
and $L$ is a matrix such that ${\rm Null}(A) \bigcap {\rm
Null}(L) = 0$ (see \cite{EHN96}). The matrix $L$ is usually the
identity matrix or an approximation of partial derivatives.

It is convenient to define $L$ using the same boundary
conditions of $A$ in order to obtain fast algorithms.
For instance, $L$ equal to the Laplacian with antireflective boundary conditions is
\begin{equation}\label{eq:L}
    L_A = \left[
      \begin{array}{ccccc}
        0 &  & \dots & &0 \\
        -1 & 2 & -1 & & \\
        & \ddots & \ddots & \ddots &\\
        &  & -1 & 2 & -1\\
        0 &  & \dots & &0 \\
      \end{array}
    \right].
\end{equation}
We note that $\dim(\Null(L))=2$. However, ${\Null}(A) \bigcap {\Null}(L) = 0$
because $\Null(L)=\S_l$. We have
$L_A= T_X \,\diag(s(\v{y}))\, T_X^{-1}$, for $s(x) = (2-2\cos(x))$
and $\v{y}$ defined according to \eqref{eq:jcf}
(note that $s(y_1)=s(y_n)=s(0)=0$).

Using the approach in Section \ref{sec:hord}, for high order boundary conditions
the Laplacian matrix can be defined similarly by
\begin{eqnarray*}
  L_C &=& T_C\diag(s(\v{y}))T_C^{-1}, \\
  L_F &=& T_F\diag(s(\v{x}))T_F^{-1},
\end{eqnarray*}
where the grid points $\v{y}$ and $\v{x}$ are defined according to \eqref{eq:Ac}
and \eqref{eq:Af} respectively.

\subsection{Tikhonov regularization and reblurring}
The minimization problem \eqref{eq:mintik} is equivalent to the
normal equations
\begin{equation}\label{eq:netik}
    (A^TA+\mu L^TL)\v{f}=A^T\v{g}.
\end{equation}
Regarding the antireflective algebra, in \cite{AR-reblur} it was
observed that the transposition operation destroys the algebra
structure and leads to worse restorations with respect to reflective
boundary conditions. To overcome this problem, in \cite{DES} the authors
proposed the reblurring which replaces the transposition
with the correlation operation. Moreover, it was shown that
the latter is equivalent to compute the solution of a discrete problem
obtained by a proper discretization of a continuous regularized
problem. Practically, we replace $A^T$ with $A'$ obtained imposing
the same boundary conditions to the coefficient matrix arising from
the PSF rotated by 180 degrees. The matrices defined in \eqref{eq:ART},
\eqref{eq:Ac}, and \eqref{eq:Af} can be denoted by $A_X(z)$,
$X\in\{A,C,F\}$ since they are univocally defined by the function $z$
when the transform $T_X$ is fixed. With this notation $A'_X(z)=A_X(\bar{z})$
since the rotation of the PSF exchange $h_j$ with $h_{-j}$ in \eqref{eq:symbol},
which corresponds to take $\bar{z}$.
Therefore, in the case of periodic boundary conditions $A'=A^T$,
but this is not true in general for the other boundary conditions.
If the PSF is symmetric then $A'=A$ for every boundary conditions.

In the following we use the reblurring approach and hence
we replace \eqref{eq:netik} with
\begin{equation}\label{eq:nerebl}
    (A'A+\mu L'L)\v{f}_{\rm reg}=A'\v{g}.
\end{equation}
If we use the same boundary conditions for $A$ and $L$, the \eqref{eq:nerebl}
can be written as $A_X(|z|^2+\mu |s|^2)\v{f}_{\rm reg}=A_X(\bar{z})\v{g}$.
In \cite{DH08} it is proved that for antireflective
boundary conditions \eqref{eq:nerebl} defines a regularization
method when $L=I$ and the PSF is symmetric.

If the spectral decomposition of $A$ is
$A = T_XDT_X^{-1}$, where $D=\diag(\v{d})$, and $L=T_X\diag(\v{s})T_X^{-1}$,
then the spectral filter solution in \eqref{eq:nerebl} is given by
\begin{equation}\label{eq:freg}
\v{f}_\sr{reg} =
  T_X \Phi D^{-1} T_X^{-1}\,\v{g},\qquad
  \Phi=\mbox{diag}_{i=1,\dots,n}\left(\frac{|d_i|^2}{|d_i|^2 + \mu |s_i|^2}\right)
\end{equation}

\subsection{GCV for the estimation of the regularization parameter}
A largely used method for estimating the regularization parameter $\mu$ is the
generalized cross validation (GCV) \cite{GHW79}.
For the method in \eqref{eq:freg}, GCV determines the regularizing parameter $\mu$
that minimizes the GCV functional
\begin{equation}\label{eq:gcv}
G(\mu) = \frac{\|\v{g}-A\v{f}_{\sr{reg}}\|_2^2}{{\rm trace}(I - A T_X \Phi D^{-1} T_X^{-1})^2},
\end{equation}
where $\v{f}_{\sr{reg}}$ is defined in \eqref{eq:freg}.
For $A_P$ (periodic boundary conditions) and $A_R$ (reflective boundary conditions with a symmetric PSF),
the equation \eqref{eq:nerebl} is exactly equation \eqref{eq:netik}. In this case,
in \cite{HaNaOL06} it is proven that
\begin{equation}\label{eq:gcv2}
    G(\mu) = \frac{\sum_{i=1}^n(\sigma_i\hat{g}_i)^2}{\left(\sum_{i=1}^n{\sigma_i}\right)^2},
\end{equation}
where $\sigma_i = |s_i|^2 / (|d_i|^2+\mu |s_i|^2)$ and $\hat{\v{g}}=T_X^{-1}\v{g}$,
for $T_X^{-1}$ equal to $F^{(n)}$ and $C^{(n)}$, respectively.

Here we have
\begin{eqnarray}\label{eq:gcnum}
  \|\v{g}-A\v{f}_{\textnormal{\footnotesize{reg}}}\|_2 &=& \|T_X(I-\Phi)T_X^{-1}\v{g}\|_2 \nonumber\\
   & \approx & \|(I-\Phi)T_X^{-1}\v{g}\|_2,
\end{eqnarray}
because $T_X$ is not unitary but it is ``close'' to a unitary matrix since
it is a rank four correction of a unitary matrix. For the estimation
of the SVD of $T$ (antireflective boundary conditions case) see \cite{DH08}.
Therefore, we compute the regularization parameter $\mu$ by minimizing
the same functional as in \eqref{eq:gcv2}.
More precisely
\begin{equation}\label{eq:mugcv}
    \mu_{\rm GCV} = {\rm arg}\min_{\mu>0} \frac{\sum_{i=1}^n(\sigma_i\hat{g}_i)^2}{\left(\sum_{i=1}^n{\sigma_i}\right)^2}.
\end{equation}

\section{Numerical experiments}\label{sec:numexp}
We present some signal deblurring problems.
The restorations are obtained by employing Tikhonov regularization using \eqref{eq:freg}
with smoothing operator $L$ equal to the Laplacian.
The code is implemented in Matalab 7.0.

In the first example the observed signal is affected by a Gaussian blur and $0.1 \%$ of
Gaussian noise. True and observed signals are shown in Figure \ref{fig:signal-1Dsimm}.
\begin{figure}
\centering
\includegraphics[width = 5.8cm]{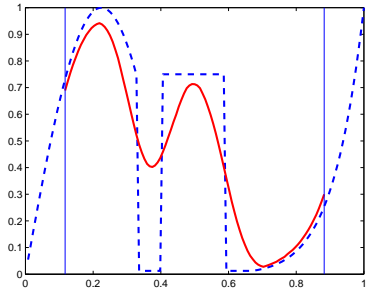}
\caption{- - - true signal, --- observed signal with
Gaussian blur and 0.1\% of noise. The vertical lines denote the field of view.}
\label{fig:signal-1Dsimm}
\end{figure}
We consider a low level of noise because in such case the restoration error is mainly
due to the error of the boundary conditions model.
Since the PSF is symmetric, we compare our blurring matrix $A_C$ with
reflective and antireflective boundary conditions.

Let $\v{f}$ be the true signal, the relative restoration errors (RRE)
$\|\v{f}-\v{f}_{\rm reg}\|_2/\|\v{f}\|_2$ is plotted in Figure \ref{fig:rre1Dsimm}.
In such figure it is evident that $A_C$ provides restorations with a lower RRE
with respect to antireflective boundary conditions, which are already known to be
more precise than reflective boundary conditions.
Moreover, the RRE curve varying the regularization parameter $\mu$ is flatter with
respect to the other boundary conditions. This allows a better estimation of the
regularization parameter using the GCV. The value $\mu_{\rm GCV}$ that
gives the minimum of the GCV functional in \eqref{eq:gcv2} is reported in Figure
\ref{fig:rre1Dsimm} with a `*'. It is evident that in the case of $A_C$ the RRE
obtained with $\mu_{\rm GCV}$ is closer to the minimum with respect to antireflective
boundary conditions.
The minimum RRE is $0.135$ for $A_C$ while it is $0.177$ for the antireflective boundary conditions.
Moreover, for  $A_C$ we obtain $\mu_{\rm GCV}=5.6\times 10^{-5}$ which gives a RRE equal to $0.135$,
while for antireflective boundary conditions $\mu_{\rm GCV}=1.6\times 10^{-4}$ gives a RRE equal to $0.502$.
\begin{figure}
\centering
\includegraphics[width = 6cm]{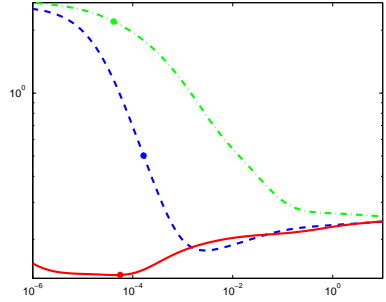}
\caption{RRE: --- $A_C$, - - - antireflective, - $\cdot$ - reflective
    ($*$ denotes values corresponding to $\mu_{\rm GCV}$).}
\label{fig:rre1Dsimm}
\end{figure}

The quality of the restoration is validated also from the
visual evidence of the restored signals. In Figure \ref{fig:rest-1Dsimm} we
show the restored signal corresponding to $\mu_{\rm opt}$, which is the value
of the regularization parameter $\mu$ corresponding to the minimum RRE, and to $\mu_{\rm GCV}$.
We note that  $A_C$ gives a better restoration
especially for preserving jumps in the signal. On the other hand this implies a slightly
lose of the smoothness of the restored signal. Eventually,
using $\mu_{\rm GCV}$ our proposal with $A_C$ gives a good enough restoration while this is not true
for the antireflective boundary conditions.
\begin{figure}
\centering
\begin{tabular}{c@{\!\!\!}c}
\includegraphics[width = 5.8cm]{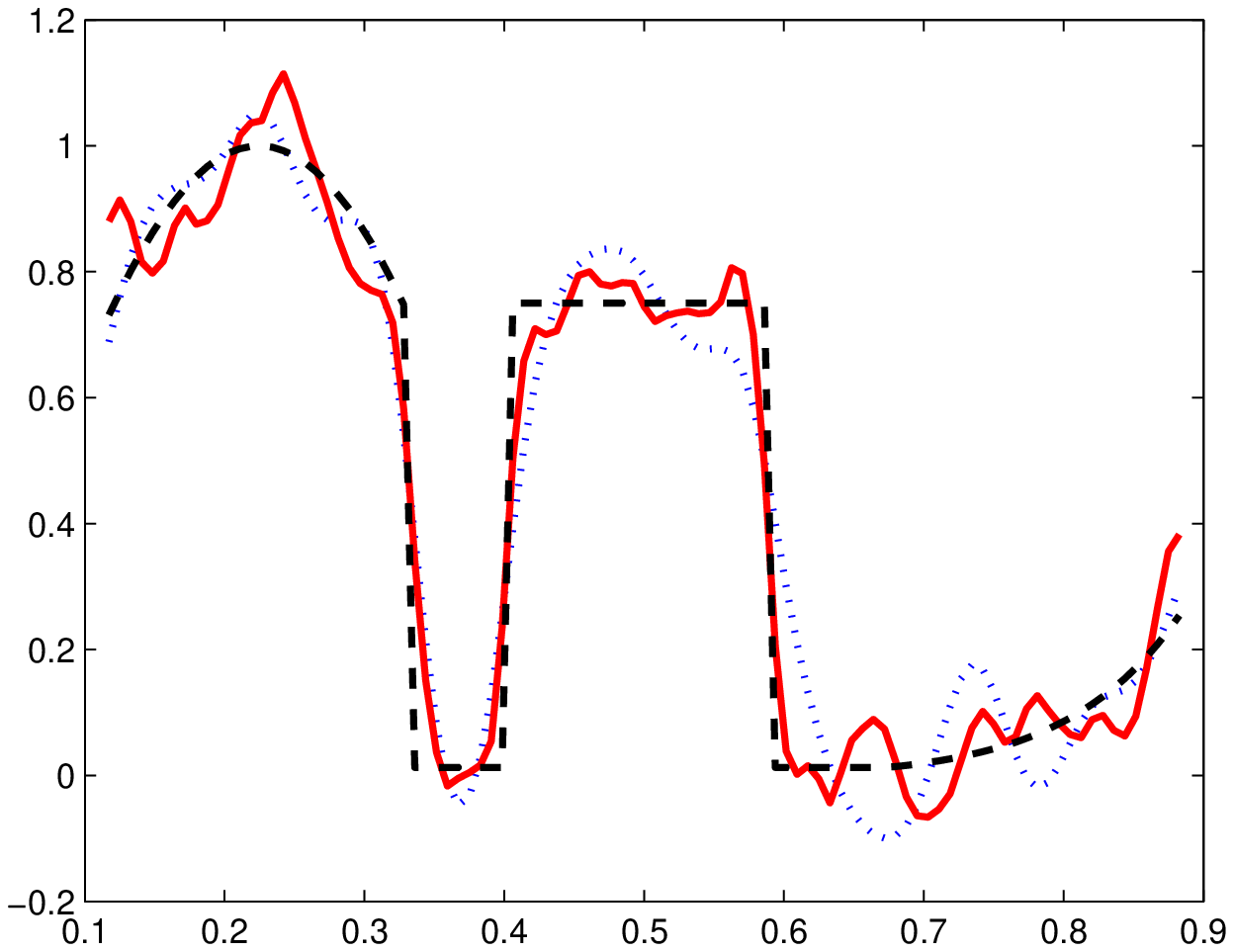}
& \includegraphics[width = 5.8cm]{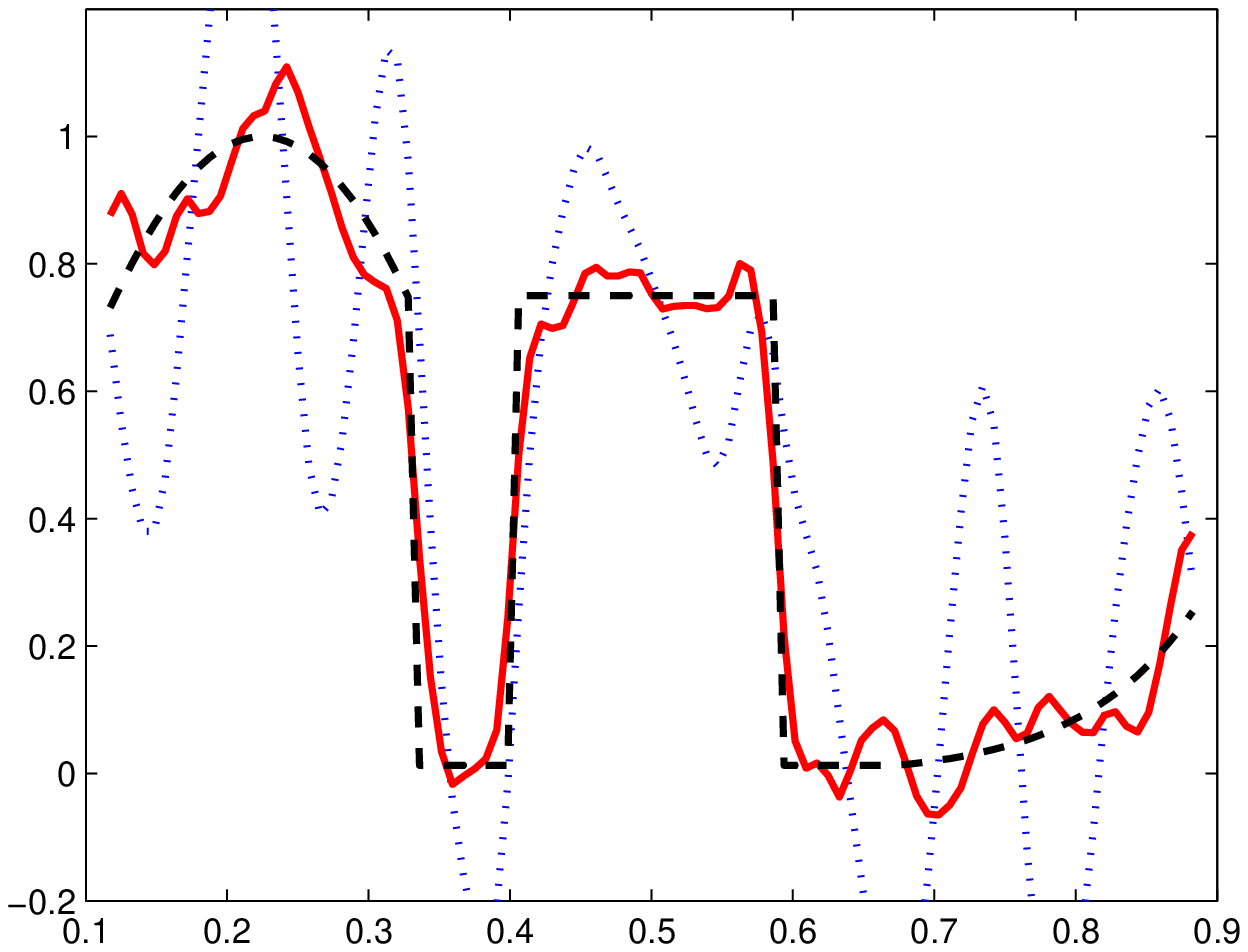}\\
$\mu_{\rm opt}$ & $\mu_{\rm GCV}$
\end{tabular}
\caption{Restored signals:
        --- $A_C$, $\cdot \cdot \cdot$ antireflective, - - - true signal.}
\label{fig:rest-1Dsimm}
\end{figure}

The second example is a moving PSF with a $1\%$ of Gaussian noise.
True and observed signals are shown in Figure \ref{fig:signal-1Dnons}.
\begin{figure}
\centering
\includegraphics[width = 5cm]{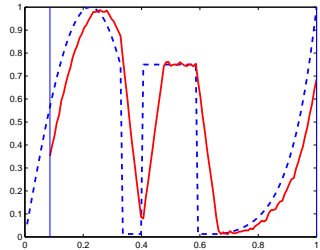}
\caption{--- true signal, - - - observed signal with
moving blur and 1\% of noise. The vertical lines denote the field of view. }
\label{fig:signal-1Dnons}
\end{figure}
Since the PSF is nonsymmetric we consider $A_F$ instead of $A_C$. Moreover, since
antireflective and reflective boundary conditions lead to matrices that can not be diagonalized
by fast transforms, we can compare $A_F$ only with periodic boundary conditions.
From Figure \ref{fig:rre1Dnons} and \ref{fig:rest-1Dnons} we note that the same considerations
done in the previous example hold unchanged.
Indeed the minimum RRE is $0.091$ for $A_F$ while it equals $0.198$ for the periodic boundary conditions.
Moreover, for  $A_F$ we obtain $\mu_{\rm GCV}=1.7\times 10^{-3}$ which gives a RRE equal to $0.096$,
while for periodic boundary conditions $\mu_{\rm GCV}=2.7\times 10^{-3}$ gives a RRE equal to $0.704$.

\begin{figure}
\centering
\includegraphics[width = 6cm]{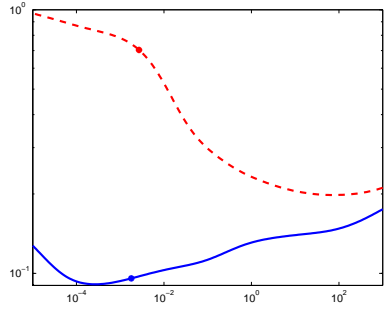}
\caption{RRE: --- $A_F$, - - - periodic
    ($*$ denotes values corresponding to $\mu_{\rm GCV}$).}
\label{fig:rre1Dnons}
\end{figure}
\begin{figure}
\centering
\begin{tabular}{c@{\!\!\!}c}
\includegraphics[width = 5cm]{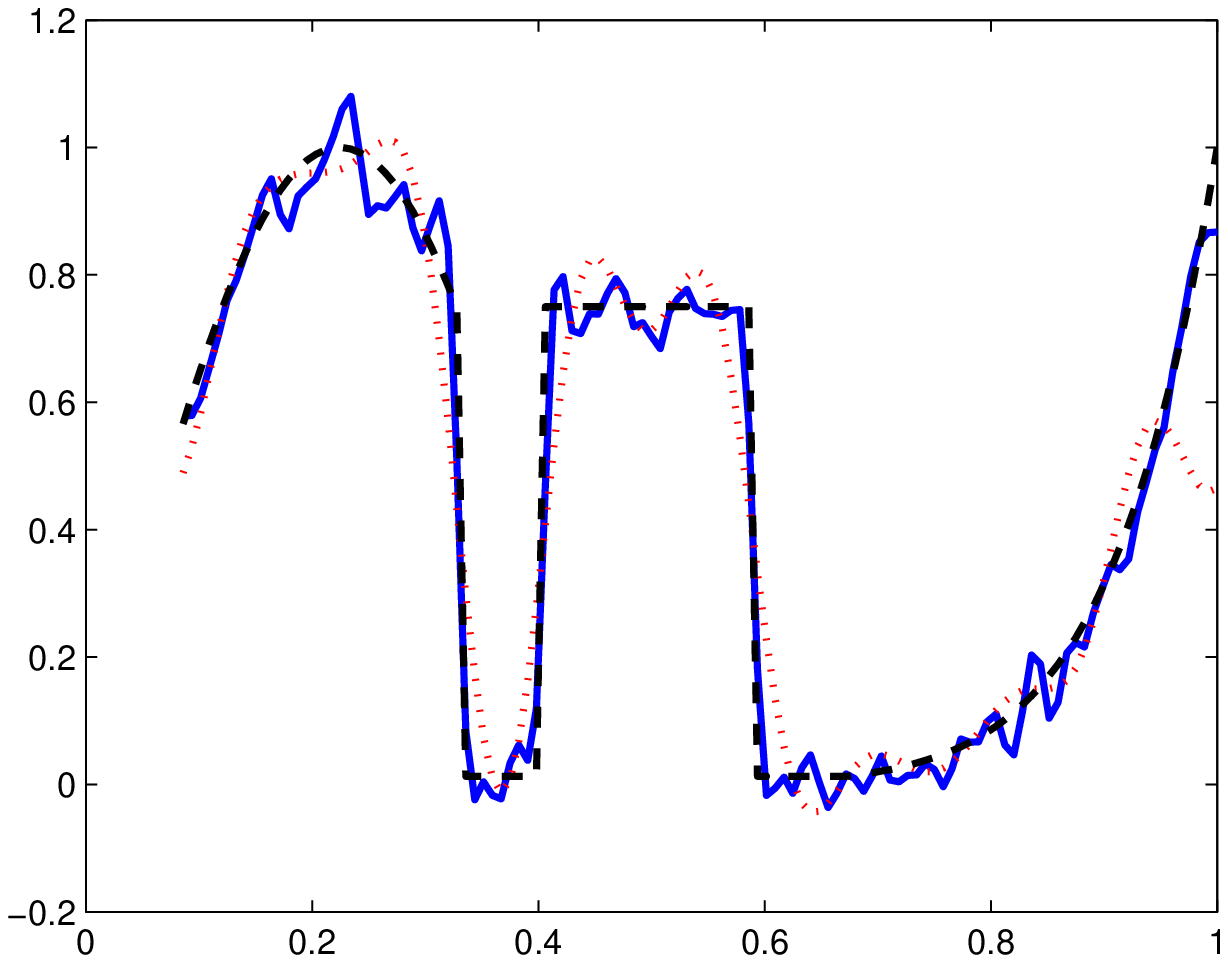}
& \includegraphics[width = 5cm]{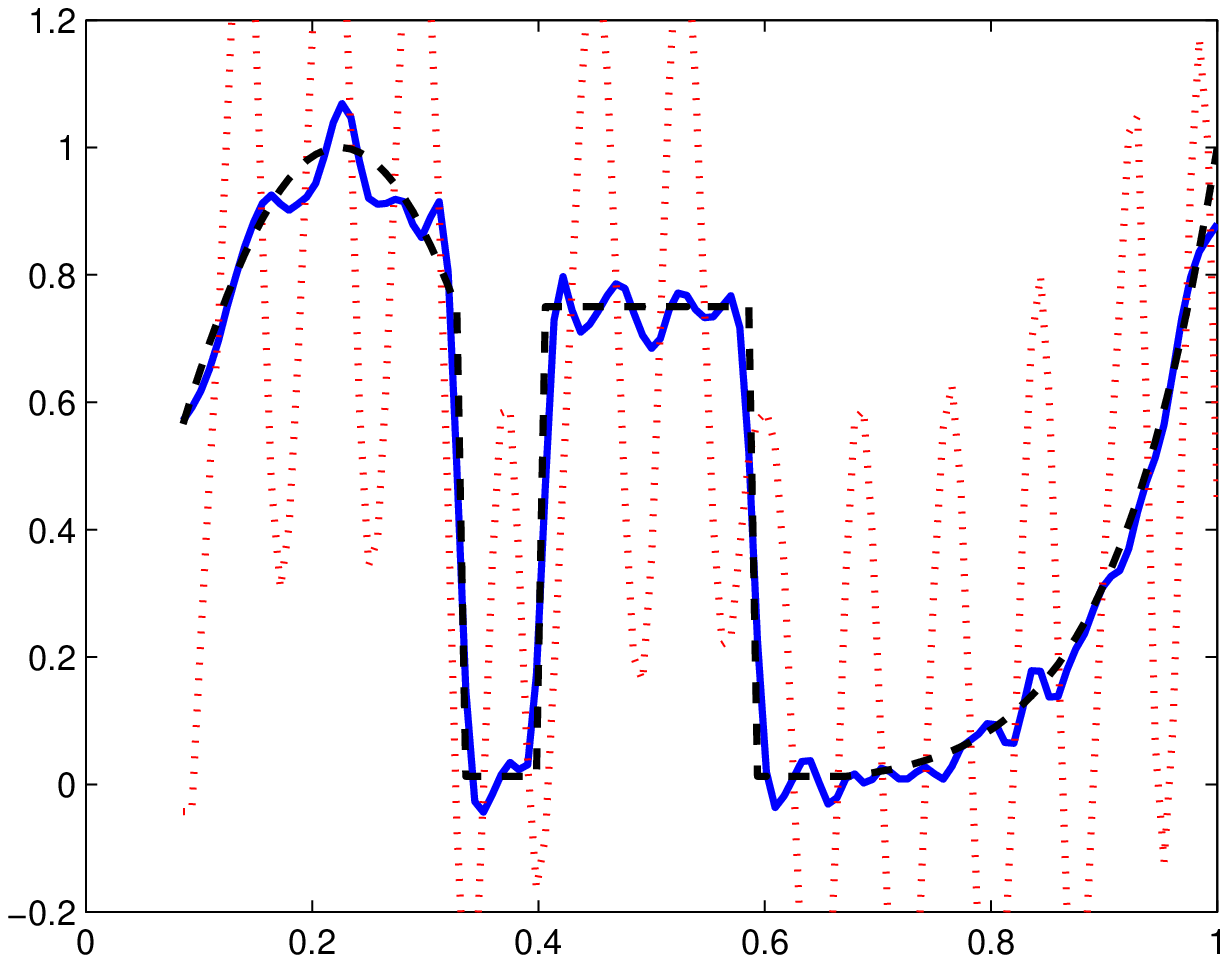}\\
$\mu_{\rm opt}$ & $\mu_{\rm GCV}$
\end{tabular}
\caption{Restored signals:
        --- $A_F$, $\cdot \cdot \cdot$ periodic, - - - true signal.}
\label{fig:rest-1Dnons}
\end{figure}

\section{The multidmensional case}\label{sec:mD}
A standard way for defining the multidimensional transform is by tensor product.
Thus
\[T_X^{(d)}=T_{X,\mi{n}}^{(d)}=T_{X,{n_1}}\otimes \cdots \otimes T_{X,{n_d}}\]
$d$ times, where $\mi{n}=(n_1,\dots,n_d)$ and $T_{X,m}$ is the transform $T_X$
of order $m$.
For a 2D array of size $n\times m$, this is easily implemented
doing $m$ 1D transforms of size $n$ for each column and then
$n$ 1D transforms of size $m$ for each row.

The computation of the eigenvalues is more involved.
The strategy is the same described in \cite{ADS08} for computing the eigenvalues of
antireflective matrices. The algorithm in Section 3.2.1 in \cite{ADS08}
can be applied to our proposal, by replacing the discrete
sine transform by the cosine or the Fourier transform.
More in details, in the 2D case:
\begin{enumerate}
  \item Compute two 1D PSF summing the rows and the columns of the 2D PSF.
  \item Apply two 1D transforms $T_X$ for computing the eigenvalues that correspond
        to the frequencies indexed as the edges of the image (the vertical edges are
        associated to the PSF obtained summing the columns and the horizontal edges to the other PSF).
  \item Apply a 2D cosine or Fourier transform for computing the eigenvalues
        indexed as the inner part of the image.
\end{enumerate}


\subsection{Image deblurring}
We consider the deblurring problem with an out of focus blur and $0.1\%$ of Gaussian noise.
The true and the observed images are shown in Figure \ref{fig:2d}.
The restored images are obtained by using the smoothing operator $L=I$.
\begin{figure}
\centering
\begin{tabular}{c@{\hspace{-1cm}}c}
\includegraphics[width = 5.2cm]{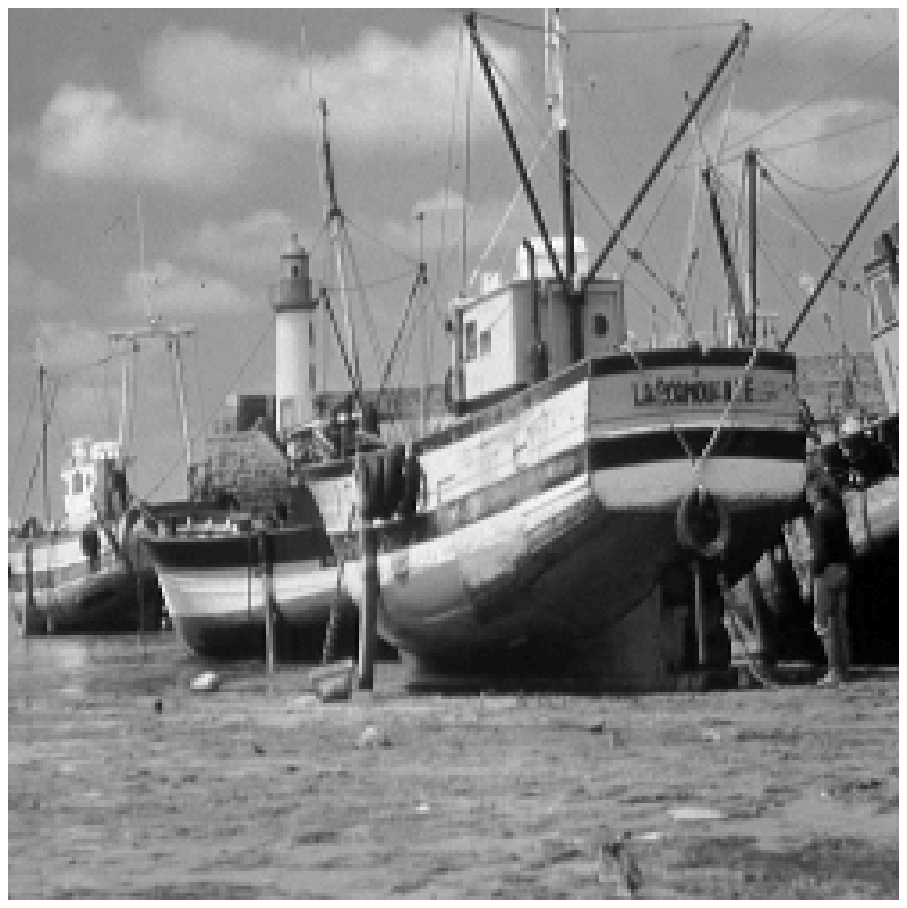}
& \includegraphics[width = 5.2cm]{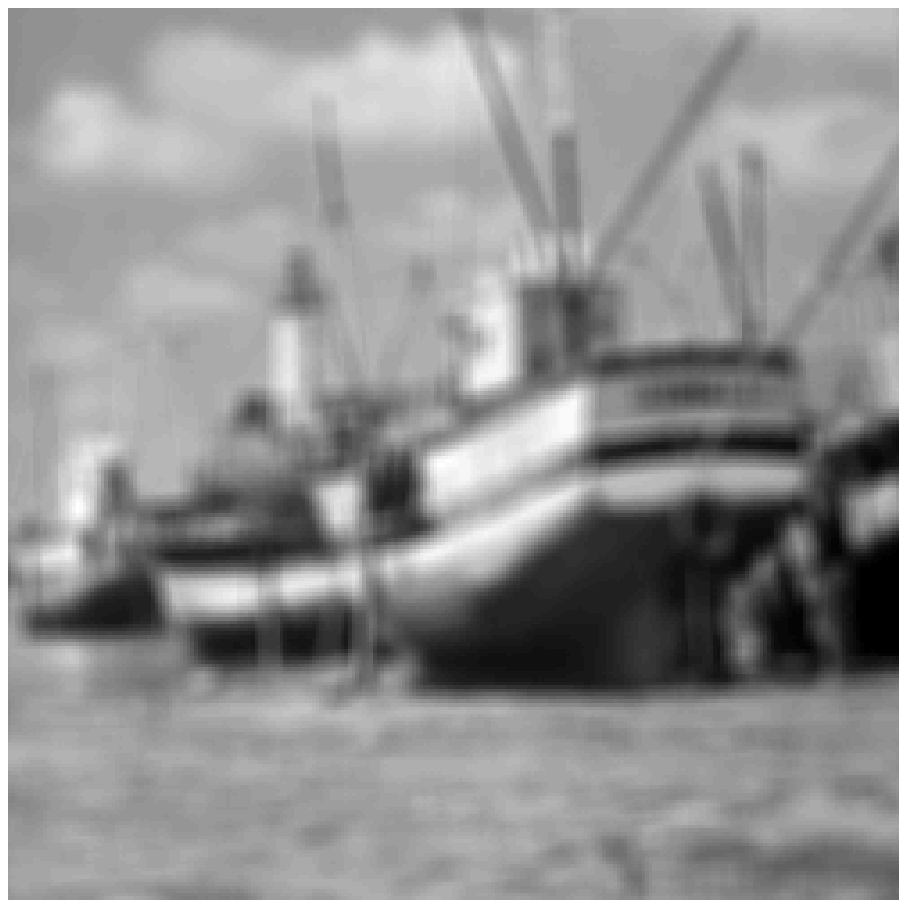}\\
(a) & (b)
\end{tabular}
\caption{(a) True image. (b) Observed image with out of focus blurring and $0.1\%$ of Gaussian noise.}
\label{fig:2d}
\end{figure}

We note that $A_C$ gives a better restoration with respect to antireflective and reflective
boundary conditions (see Figure \ref{fig:rre2d}).
\begin{figure}
\centering
\includegraphics[width = 6cm]{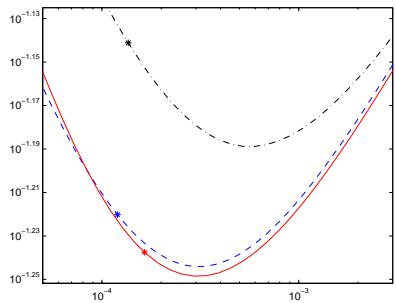}
\caption{RRE: --- $A_C$, - - - antireflective, - $\cdot$ - reflective
    ($*$ denotes values corresponding to $\mu_{\rm GCV}$).}
\label{fig:rre2d}
\end{figure}
In Table \ref{tab:rre-2d} the RRE is shown for $\mu_{\rm opt}$ and $\mu_{\rm GCV}$,
while in Figures \ref{fig:opt-2d} and \ref{fig:gcv-2d} we have the restored images
for the considered boundary conditions and the two choices of $\mu$.

\begin{table}
\caption{RRE for the restoration of the observed image in Figure \ref{fig:2d}.}
\label{tab:rre-2d}
\begin{tabular}{lll}
\hline\noalign{\smallskip}
   & $\mu_{\rm opt}$ & $\mu_{\rm GCV}$ \\
\noalign{\smallskip}\hline\noalign{\smallskip}
reflective & 0.0647 & 0.0723 \\
antireflective & 0.0570 & 0.0602 \\
$A_C$ & 0.0564 & 0.0579 \\
\noalign{\smallskip}\hline
\end{tabular}
\end{table}
\begin{figure}
\centering
\begin{tabular}{@{\hspace{-1cm}}c@{\hspace{-1cm}}c@{\hspace{-1cm}}c}
\includegraphics[width = 5.2cm]{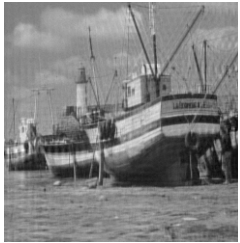}
& \includegraphics[width = 5.2cm]{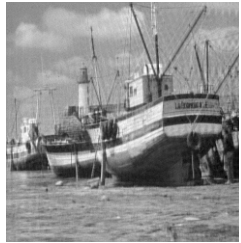}
& \includegraphics[width = 5.2cm]{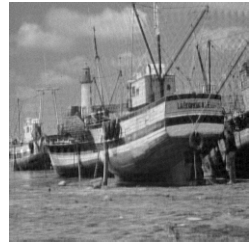}\\
(a) reflective & (b) antireflective & (c) $A_C$
\end{tabular}
\caption{Restored images for $\mu_{\rm opt}$.}
\label{fig:opt-2d}
\end{figure}
\begin{figure}
\centering
\begin{tabular}{@{\hspace{-1cm}}c@{\hspace{-1cm}}c@{\hspace{-1cm}}c}
\includegraphics[width = 5.2cm]{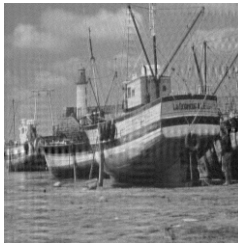}
& \includegraphics[width = 5.2cm]{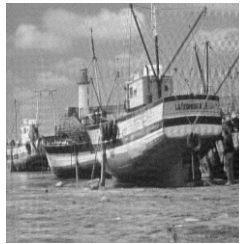}
& \includegraphics[width = 5.2cm]{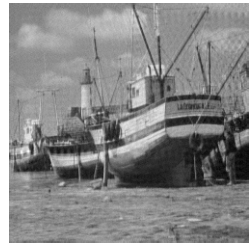}\\
(a) reflective & (b) antireflective & (c) $A_C$
\end{tabular}
\caption{Restored images for $\mu_{\rm GCV}$.}
\label{fig:gcv-2d}
\end{figure}

Even if there is not a large reduction of the RRE, the images restored with
$A_C$ show lesser ringing effects with respect to the antireflective boundary conditions
at least in the south-west corner of the image.

For a general image the use of $A_C$ instead of antireflective boundary
conditions leads to negligible improvement if the image is not smooth enough at
the boundary or if the noise level is so high to dominate the approximation
error in the restoration.

For concluding, we consider a nonsymmetric PSF. The observed image in Figure \ref{fig:nons-2d} (a)
is affected from an out of focus combined with a moving blur. Since the PSF is nonsymmetric,
we compare $A_F$ with periodic boundary conditions like in Section \ref{sec:numexp}.
In Figure \ref{fig:nons-2d} (b) the RRE for $A_F$ is significantly lower than the RRE of
periodic boundary conditions. Indeed, in Figure \ref{fig:rest-2dnons} it is possible to note
that in the case of periodic boundary conditions,
the ringing effects at the edges (in the direction of the motion) damage completely the
restoration also for $\mu_{\rm opt}$.
Moreover, the GCV gives a good estimation of the regularization parameter only
in the case of $A_F$ as it is evident in the plot of Figure \ref{fig:nons-2d} (b).
\begin{figure}
\centering
\begin{tabular}{cc}
\includegraphics[width = 5.2cm]{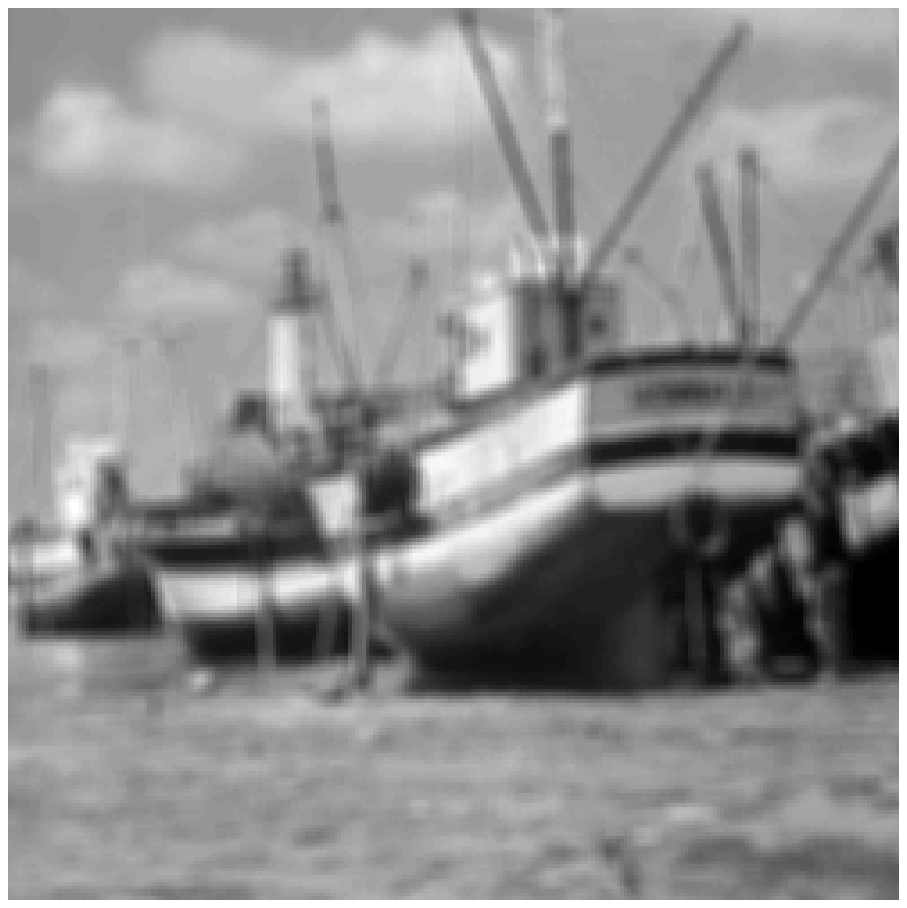}
& \includegraphics[width = 6cm]{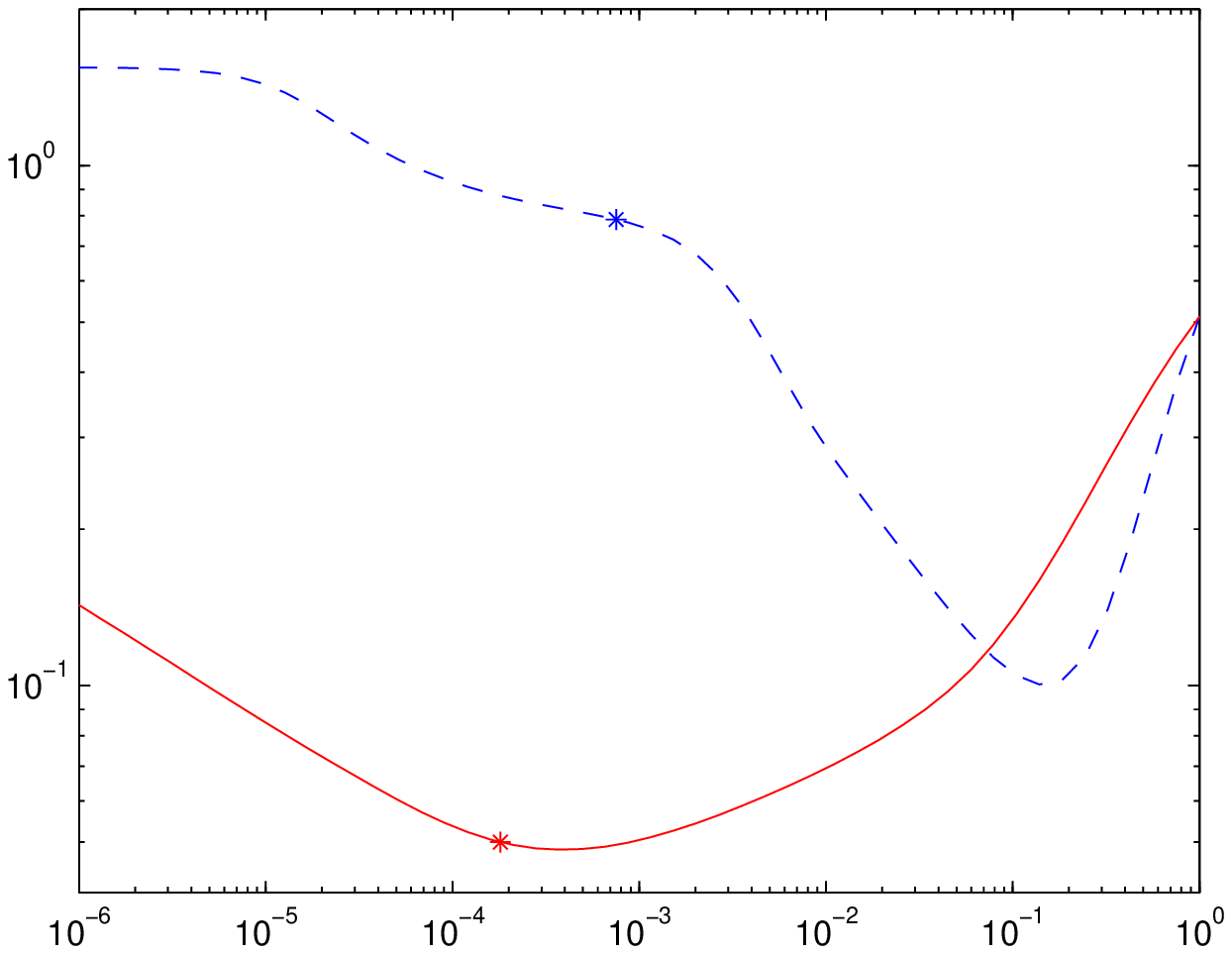}\\
(a)&(b)
\end{tabular}
\caption{(a) Observed image with a nonsymmetric PSF. (b)
RRE: --- $A_F$, - - - periodic boundary conditions
($*$ denotes values corresponding to $\mu_{\rm GCV}$).}
\label{fig:nons-2d}
\end{figure}
\begin{figure}
\centering
\begin{tabular}{@{\hspace{-1cm}}c@{\hspace{-1cm}}c@{\hspace{-1cm}}c}
\includegraphics[width = 5.2cm]{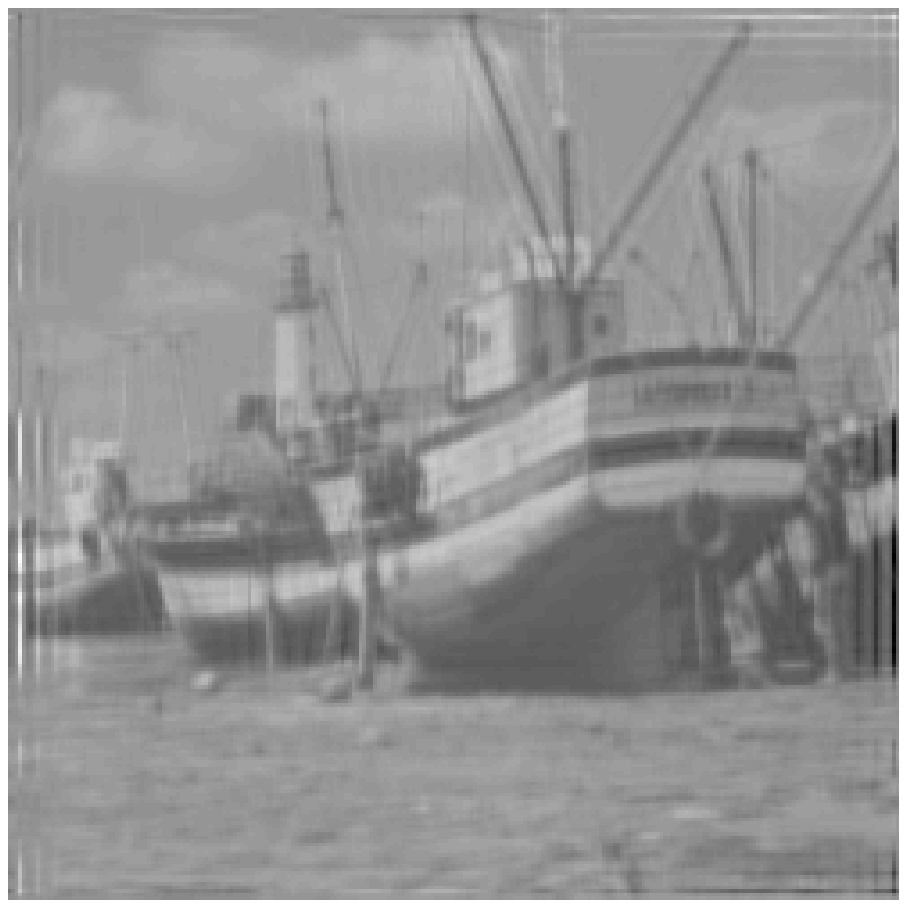}
& \includegraphics[width = 5.2cm]{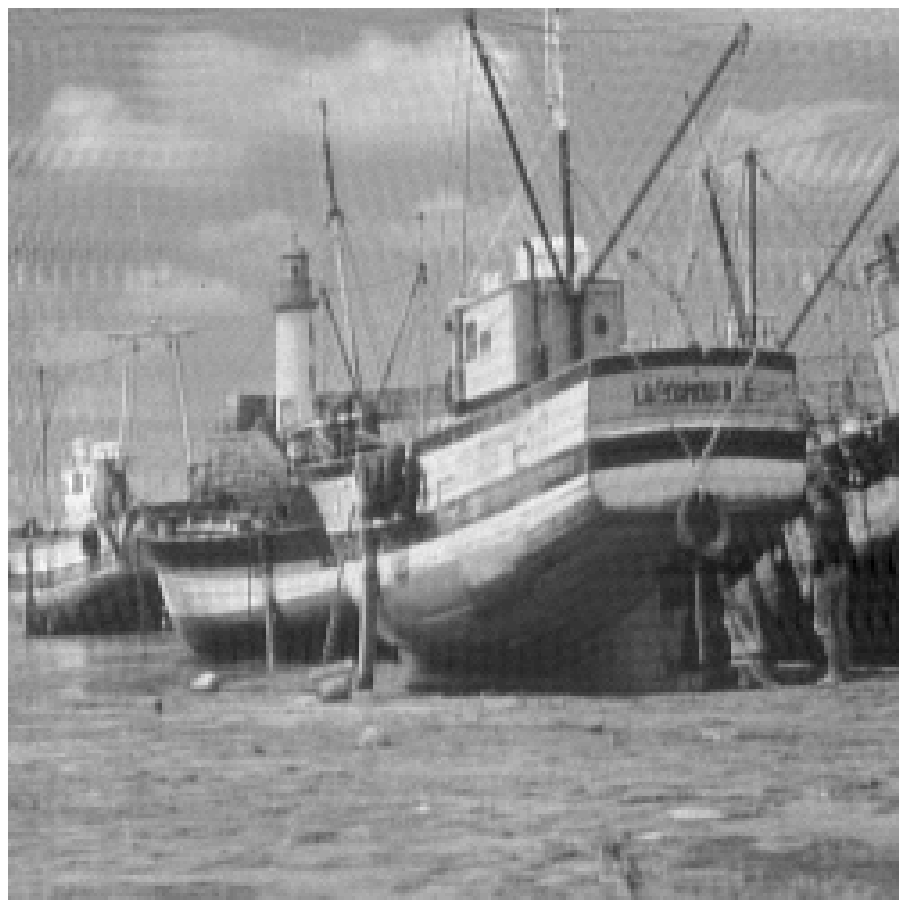}
& \includegraphics[width = 5.2cm]{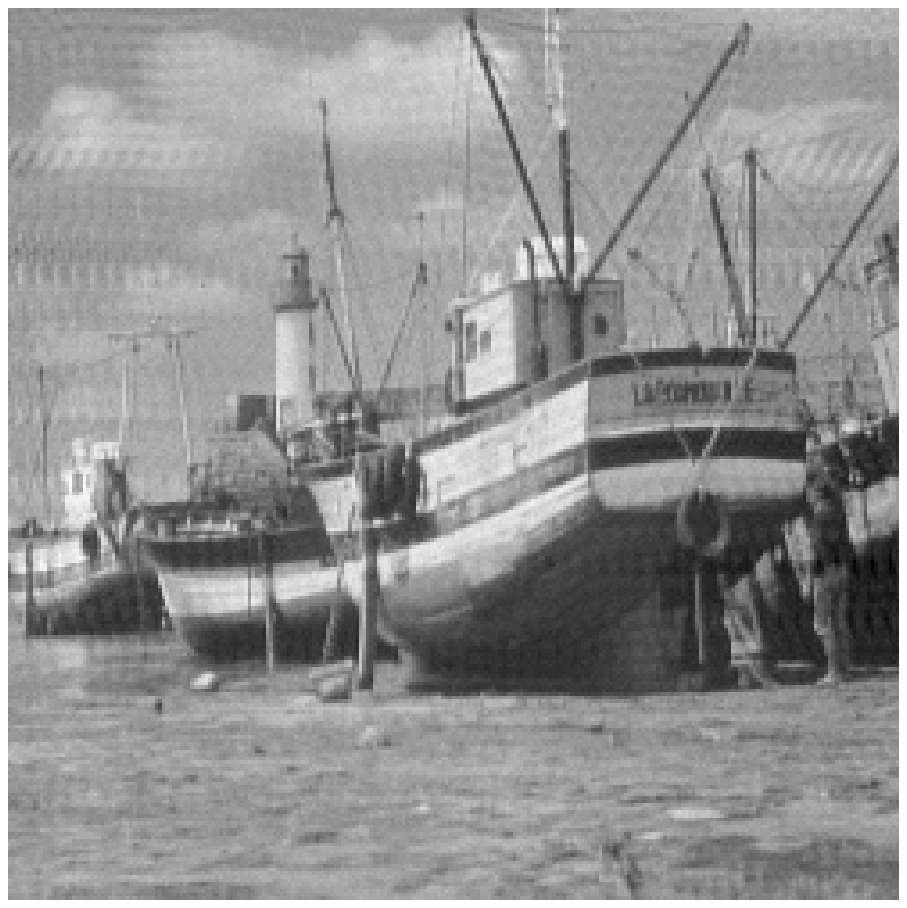}\\
(a) periodic with $\mu_{\rm opt}$ & (b) $A_F$ with $\mu_{\rm opt}$ & (c) $A_F$ with $\mu_{\rm GCV}$
\end{tabular}
\caption{Restored images for the observed image in Figure \ref{fig:nons-2d} (a).}
\label{fig:rest-2dnons}
\end{figure}

\section{Conclusions}\label{sec:concl}
In Section \ref{sec:hord} we have given a framework to construct precise models for
deconvolution problems using fast trigonometric transforms.
The same idea could be applied to different problems having a shift invariant kernel.
Indeed, if we have information on the signal to restore, the set $\S_l$ can be replaced
by other functional spaces that we want to preserve. Moreover, higher order boundary
conditions can be constructed, even if the numerical results show that for
image deblurring problems this approach does not give substantial improvements.

The introduced fast transforms was applied in connection with Tikhonov regularization
and the reblurring approach. However, they could be useful also for more sophisticated
regularization methods like Total Variation for instance.

The analysis of the Tikhonov regularization in Section \ref{sec:tik}
is useful also for the antireflective boundary conditions.
Indeed, it was not previously considered in the literature the case
of $L\neq I$ and the choice of the regularization parameter $\mu$ using the GCV.

Since the proposed transforms are not orthogonal, they were applied in connection with
the reblurring approach, but the theoretical analysis of the regularizing properties of such
approach exists only in the case of antireflective boundary conditions and
symmetric kernel (see \cite{DH08}).
Therefore, a more detailed analysis, especially in the multidimensional case with a nonsymmetric
kernel, should be considered in the future.

\begin{acknowledgements}
I would thank Serra Capizzano for useful discussions.
\end{acknowledgements}



\begin{thebibliography}{biblio}
%
%
\bibitem{ADNS}
    A. Aric\`{o}, M. Donatelli, J. Nagy, and S. Serra Capizzano,
    The Anti-Reflective Transform and Regularization by Filtering,
    Numerical Linear Algebra in Signals, Systems, and Control.,
    in Lecture Notes in Electrical Engineering, edited by S. Bhattacharyya,
    R. Chan, V. Olshevsky, A. Routray, and P. Van Dooren, Springer Verlag, in press.
%
\bibitem{ADS08}
    A. Aric\`o, M. Donatelli, and S. Serra-Capizzano,
    Spectral analysis of the anti-reflective algebra,
    Linear Algebra Appl., 428, 657--675 (2008).
%
%
\bibitem{ChHa08}
    M. Christiansen and M. Hanke,
    Deblurring methods using antireflective boundary conditions,
    SIAM J. Sci. Comput., 30, 855--872 (2008).
%
\bibitem{DES}
    M. Donatelli, C. Estatico, A. Martinelli, and S. Serra Capizzano,
    Improved image deblurring with anti-reflective boundary conditions and re-blurring,
    Inverse Problems, 22, 2035--2053 (2006).
%
\bibitem{DH08}
    M. Donatelli and  M. Hanke,
    On the condition number of the antireflective transform,
    manuscript (2008).
%
\bibitem{AR-reblur}
    M. Donatelli and S. Serra Capizzano,
    Anti-reflective boundary conditions and re-blurring,
    Inverse Problems, 21, 169--182 (2005).
%
%
\bibitem{EHN96}
    H.~W.~Engl, M.~Hanke, and A.~Neubauer,
    Regularization of Inverse Problems,
    Kluwer, Dordrecht (1996).
%
\bibitem{GHW79}
    G. Golub, M. Health, and G. Wahba,
    Generalized cross-validation as a method for choosing good ridge parameter,
    Technometrics, 21, 215--223 (1979).
%
\bibitem{Gene}
    G.~H. Golub and C.~F. Van Loan,
    Matrix Computations, third edition,
    The Johns Hopkins University Press, Baltimore (1996).
%
\bibitem{HaNaOL06}
  P.~C.~Hansen, J.~G.~Nagy, and D.~P.~O'Leary,
  Deblurring Images: Matrices, Spectra, and Filtering,
  SIAM, Philadelphia, PA (2006).
%
\bibitem{NCT}
    M. Ng, R.~H. Chan, and W.~C. Tang,
    A fast algorithm for deblurring models with Neumann boundary conditions,
    SIAM J. Sci. Comput., 21, 851--866 (1999).
%
\bibitem{Perr}
    L. Perrone,
    Kronecker Product Approximations for Image Restoration with Anti-ReflectiveBoundary Conditions,
    Numer. Linear Algebra Appl., 13(1),1--22 (2006).
%
\bibitem{model-tau}
    S. Serra Capizzano,
    A note on anti-reflective boundary conditions and fast deblurring models,
    SIAM J. Sci. Comput. 25(3), 1307--1325 (2003).
\end{thebibliography}
\end{document}